\crefname{figure}{Fig.}{Figs.}
\newcommand{\koppa}{\text{$\shortmid$\kern-0.3605em{\raise0.341em\hbox{$\circ$}}}}
\declaretheorem[name=Theorem, refname={Theorem,Theorems}, Refname={Theorem, Theorems}]{thm}
\declaretheorem[name=Lemma, refname={Lemma, Lemmas}, Refname={Lemma, Lemmas}, sibling=thm]{lem}
\title{A short note on graphs with long Thomason chains}
\author{Marcin Briański, Adam Szady}
\address{
\textup{
$\{\texttt{marbri},\texttt{adsz}\}\texttt{@beit.tech}$
}
\\[8pt]
Beit.tech}
\date{June 13, 2020}
\DeclareMathOperator{\Prefx}{Pref}
\let\base\textsc
\def\gadget#1{\textup{\texttt{#1}}}
\newlength\baz
\newcommand{\podpisik}[1]{\hbox to 0.3\baz{\hfill$\gadget{#1}$\hfill}}
\newcommand{\podpisikpac}[0]{\hbox to 0.3\baz{\hfill$\gadget{\$}$\hspace{23pt}\hfill}}
\newcommand{\kawalek}[2]{
    \begin{subfigure}{.22\textwidth}
        \centering
        \includegraphics[page=#1,scale=0.4]{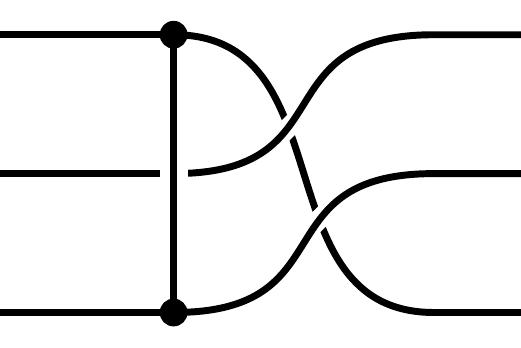}
        \caption*{#2}
        \vspace{.6em}
    \end{subfigure}
}
\begin{document}

\maketitle

    \begin{abstract}
    We present a family of 3-connected cubic planar Hamiltonian graphs with an exponential number of 
    steps required by Thomason's algorithm. The base of the exponent is approximately $1.1812...$, which exceeds previous results in the area.
    \end{abstract}
    
    \section{Introduction}
    
    In \cite{thomason}
    Thomason introduced a simple constructive proof of Smith's theorem.
    His algorithm, given a Hamiltonian cycle and one of its edges, finds a second Hamiltonian cycle that also contains this edge. The algorithm consists of steps called \emph{lollipops}, which alter a Hamiltonian path in a reversible, deterministic way.
    
    It was interesting for many researchers to determine whether the number of steps involved in 
    Thomason's algorithm grows polynomially with the size of the graph.
    The first one to provide a family of graphs with exponential growth of the number of steps was
    Krawczyk, see \cite{krawczyk}. More precisely, the number of steps was $\Theta(2^{n/8})$, 
    where $n$ is the number of vertices of the input graph.
    The construction by Krawczyk contained small mistakes that were subsequently corrected by Cameron. The construction was also generalised by Cameron, however number of steps required remained the same, see~\cite{cameron}.
    
    More recently published work~\cite{zhong} makes an argument for discussing cubic cyclically 4-edge connected graphs and presents a family of such graphs for which the number of steps of Thomason's algorithm grows like $\Theta(2^{n / 16})$. Unlike previously mentioned graph families, where graphs have exactly three Hamiltonian cycles regardless of the number of vertices, the number of Hamiltonian cycles in Zhong's family of graphs grows exponentially with the number of vertices. 

    Our interest in fast growing number of steps for small graphs stems from an attempt to compare scaling behaviour of the quantum equivalent of Thomason's algorithm, where both available quantum hardware and simulation capabilities limit studied graph size.
    
    \section{Description of Thomason's algorithm}
    
    For the sake of completeness, we include a brief description of Thomason's algorithm. 
    
    \begin{thm}[\cite{thomason}]\label{thm:thomason}
        Let \(G\) be a cubic graph, \(C\) be a Hamiltonian cycle in \(G\) and \(e\) be an edge of the cycle \(C\). Then the number of Hamiltonian cycles in \(G\) that contain \(e\) is even. Moreover, the proof provides an algorithm, such that given \(G\), \(C\) and \(e\), as in theorem, it outputs a Hamiltonian cycle in \(G\) different from \(C\) that also contains \(e\).
    \end{thm}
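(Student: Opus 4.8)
The plan is to recast the statement as a parity count on an auxiliary graph built from near-Hamiltonian structures, via the classical \emph{lollipop} construction. Fix the edge $e = \{u,v\}$. I would call a \emph{rooted path} any Hamiltonian path of $G$ having $u$ as one endpoint and using $e$ as its first edge, so that it reads $u, v, \dots, w$ for some \emph{free end} $w$. These rooted paths are the vertices of an auxiliary graph $\mathcal{L}$, and the entire argument reduces to controlling the degrees in $\mathcal{L}$.

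The edges of $\mathcal{L}$ are the \emph{lollipop moves}. Given a rooted path $P = u, v, \dots, w$, the free end $w$ has exactly three neighbours in the cubic graph $G$; one of them is the predecessor of $w$ on $P$, leaving two \emph{candidate} neighbours. For a candidate neighbour $z \neq u$, adding the edge $wz$ creates a unique cycle (the segment of $P$ from $z$ to $w$ together with $wz$) hanging off the initial segment rooted at $u$ — a ``lollipop'' — and deleting the path edge leaving $z$ towards $w$ reopens this into a new rooted path with a new free end. The key structural facts I would establish are that this operation is an involution (applying it, and then the move determined by the newly created edge, returns $P$), and consequently that every vertex of $\mathcal{L}$ has degree equal to the number of its candidate neighbours different from $u$, hence degree $1$ or $2$. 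Thus $\mathcal{L}$ is a disjoint union of simple paths and cycles.

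Next I would identify the degree-$1$ vertices. A rooted path $P$ has degree $1$ exactly when one of the two candidate neighbours of its free end $w$ equals $u$, i.e. when $\{u,w\} \in E(G)$, in which case adding $uw$ closes $P$ into a Hamiltonian cycle necessarily containing $e$. Conversely, from any Hamiltonian cycle through $e$ I delete the unique edge at $u$ other than $e$ to obtain such a degree-$1$ rooted path; these two operations are mutually inverse, giving a bijection between Hamiltonian cycles through $e$ and degree-$1$ vertices of $\mathcal{L}$. Since the sum of all degrees in $\mathcal{L}$ is even while every degree is $1$ or $2$, the number of degree-$1$ vertices is even, which proves the parity statement. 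For the algorithmic part, the cycle $C$ corresponds to one endpoint of some path component of $\mathcal{L}$; starting there and repeatedly applying the unique available lollipop move walks along that component, and since the component is a finite simple path this walk must terminate at its other endpoint, which closes into a Hamiltonian cycle through $e$ distinct from $C$.

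The main obstacle I anticipate is verifying carefully that the lollipop move is a genuine involution and that it never produces anything other than a rooted path or a closed Hamiltonian cycle; this amounts to a case analysis of how the deleted and added edges interact at $z$ and at $u$, and it is precisely what forces the clean ``degree $\le 2$'' conclusion. The bijection and the handshake step are then routine, and termination of the algorithm is immediate because no rooted path can repeat along a simple-path component of $\mathcal{L}$.
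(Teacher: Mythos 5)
Your proposal is correct and takes essentially the same approach as the paper: the classical lollipop auxiliary graph on Hamiltonian paths, the observation that every vertex there has degree $1$ or $2$ with degree-$1$ vertices in bijection with Hamiltonian cycles through $e$, and the algorithm obtained by walking along a path component from $C$ to its other endpoint. The only cosmetic differences are that you restrict the vertex set to paths rooted at $u$ with first edge $e$ (the paper takes all oriented Hamiltonian paths and notes that lollipops preserve the first edge) and that you derive the parity from the handshake lemma rather than by pairing the cycles lying in a common component.
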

    \begin{proof}
    
        We will define an auxiliary graph \(\koppa(G)\) with vertex set consisting of all oriented Hamiltonian paths in \(G\). Consider a Hamiltonian path \(P\) in \(G\) and write \(v_1,v_2, \dots, v_n\) for the vertices of \(G\) in order that \(P\) visits them.
        As degree of \(v_n\) is 3 in \(G\), it must have a neighbour \(v_i\) with \(1 < i < n - 1\) (there is either one or two such neighbours, depending on whether or not \(P\) is a Hamiltonian cycle). Thus \(v_1,\dots, v_i, v_n, v_{n-1}, v_{n-2}, \dots, v_{i+1}\) is again an oriented Hamiltonian path and call it \(Q\).
        In this case the paths \(P\) and \(Q\) are adjacent in \(\koppa(G)\). Observe that this relation is symmetric, thus \(\koppa(G)\) is an undirected graph. See \cref{fig:lolipop} for a graphical representation of adjacent paths in \(\koppa(G)\).
        
        Clearly, the degree of any vertex in \(\koppa(G)\) is either 1 or 2, thus 
        it is a disjoint union of paths and cycles. Moreover vertices of degree 1 in \(\koppa(G)\) 
        are precisely Hamiltonian cycles in \(G\). Pairing Hamiltonian cycles if they lie in the same component of  \( \koppa(G)\) yields the desired result after observing that the first edge is always preserved between neighbours in \(\koppa(G)\).
    
    \begin{figure}
        \centering
        \vspace{10pt}
        \begin{subfigure}{\textwidth}
            \centering
            \begin{overpic}[scale=0.1]{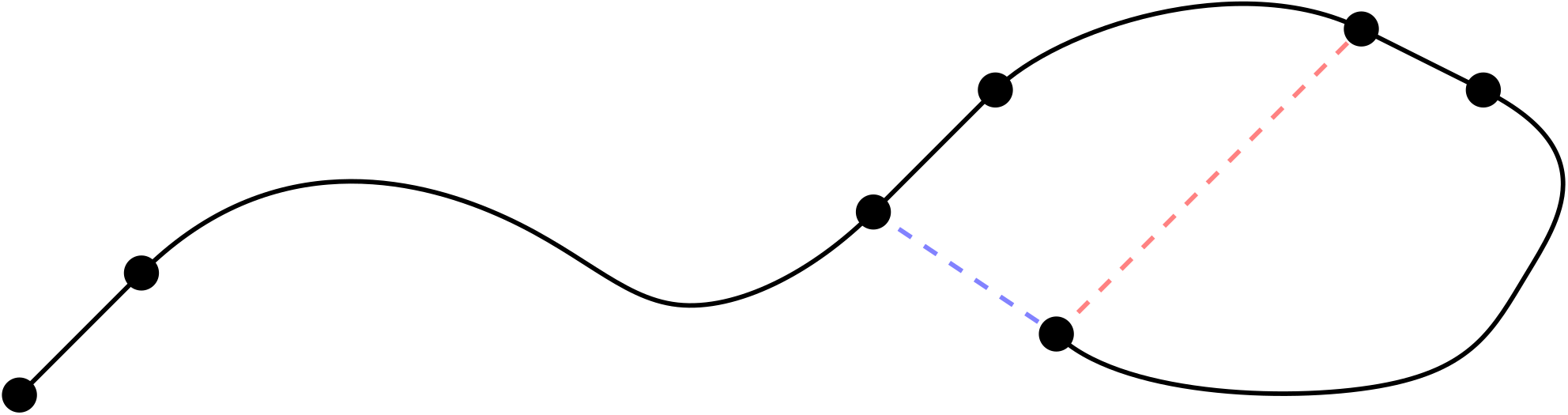}
                \put (-3,5) {\(v_1\)}
                \put (3.5, 11.5) {\(v_2\)}
                \put (61, 1) {\(v_n\)}
                \put (50, 15) {\(v_i\)}
                \put (57, 25) {\(v_{i+1}\)}
                \put (88, 27) {\(v_j\)}
                \put (97, 22) {\(v_{j+1}\)}
            \end{overpic}
            \caption*{Initial Hamiltonian path \(P\).}
            \vspace{10pt}
        \end{subfigure}
        \begin{subfigure}{0.45\textwidth}
            \centering
            \includegraphics[scale=0.1]{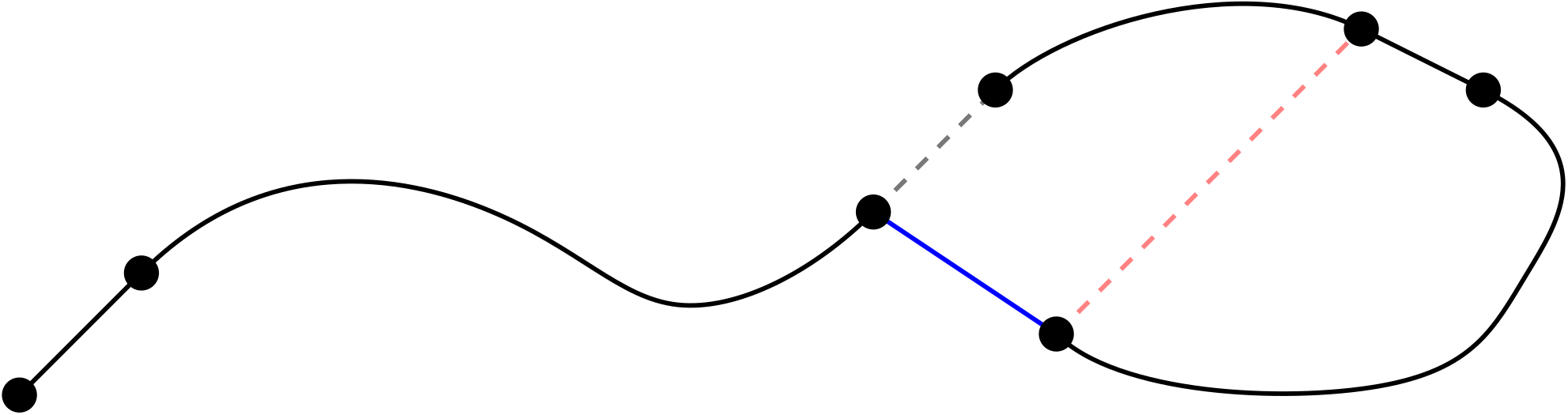}
            \caption*{Neighbour \(Q\) of \(P\) in \(\koppa(G)\).}
            \hspace{10pt}
        \end{subfigure}
        \begin{subfigure}{0.45\textwidth}
            \centering
            \includegraphics[scale=0.1]{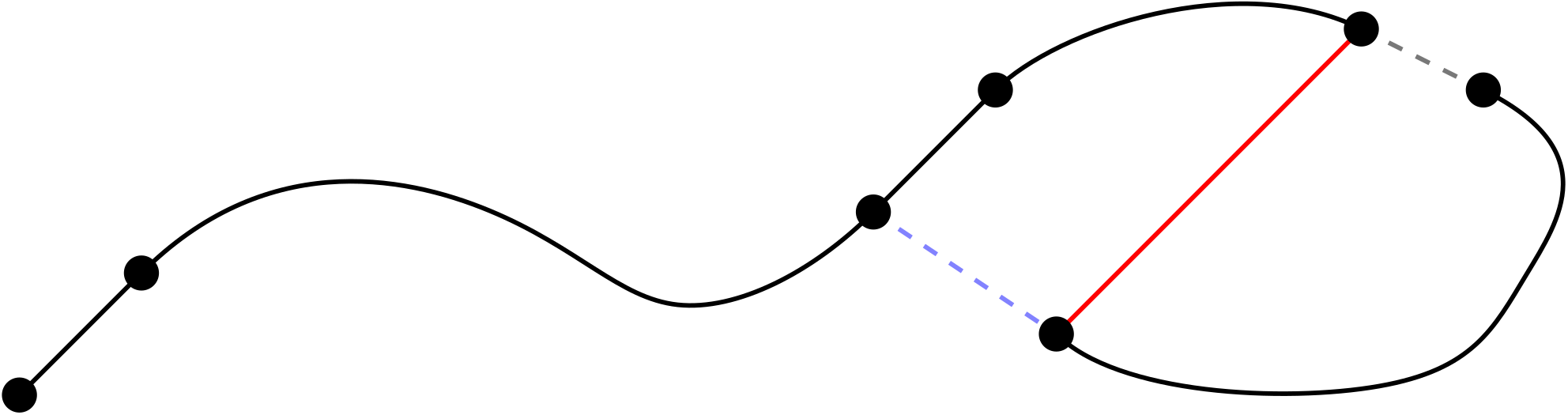}
            \caption*{Another neighbour of \(P\) in \(\koppa(G)\) (which exists whenever \(P\) is not a Hamiltonian cycle).}
            \hspace{10pt}
        \end{subfigure}
        \caption{Two ways one can lollipop a single path that is not a cycle.}
        \label{fig:lolipop}
    \end{figure}
    \end{proof}
    
    The operation of changing currently considered Hamiltonian path to one adjacent in \(\koppa(G)\) is called \emph{lollipopping}.
    
    The proof of \cref{thm:thomason} is algorithmic in nature -- repeatedly lollipopping a given Hamiltonian cycle with a distinguished edge (remembering previous path in order not to move back) will eventually yield a second cycle which also contains the distinguished edge. 
    
    \section{Description of the graph family}
    The construction uses three components depicted in \Cref{fig1}: a \emph{cap} -- $K_3$ with each vertex having an additional edge connected to a vertex in the next part of the graph; a cap flipped horizontally -- a \emph{pac}; and a \emph{gadget} consisting of two vertices and edges to adjacent components.
    
    The family of graphs is indexed by natural numbers.
    The $n\text{-th}$ graph -- $G_n$ starts with a cap, followed by $n$ gadgets and finally terminates with a pac.
    Each gadget introduces 2 new vertices, so $|V(G_n)| = 2n + 6$ and $|E(G_n)| = 3n + 9$.
    Each $G_n$ is cubic, $3\text{-connected}$, planar, and has exactly three Hamiltonian cycles.
    \begin{figure}[ht]
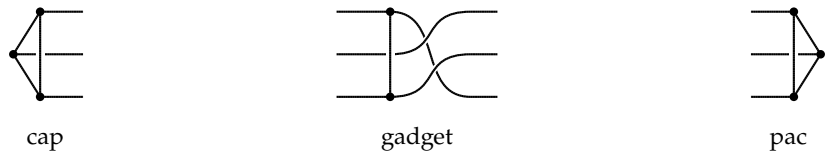

        \centering
        \begin{subfigure}{.3\textwidth}
            \centering
            \includegraphics[page=2,scale=0.4]{graphs.pdf}
            \caption*{cap}
            \label{fig1:cap}
        \end{subfigure}
        \begin{subfigure}{.3\textwidth}
            \centering
            \includegraphics[page=1,scale=0.4]{graphs.pdf}
            \caption*{gadget}
            \label{fig1:gadget}
        \end{subfigure}
        \begin{subfigure}{.3\textwidth}
            \centering
            \includegraphics[page=3,scale=0.4]{graphs.pdf}
            \caption*{pac}
            \label{fig1:pac}
        \end{subfigure}
        \caption{Graph construction components.}
        \label{fig1}
    \end{figure}
    
    \begin{figure}[ht]
        \centering
        \captionsetup{justification=centering}
        \includegraphics[page=4,scale=0.5]{graphs.pdf}
        \caption{
            The graph $G_3$.\\
            For convenience, consecutive gadgets are presented on alternating backgrounds.
        }
        \label{fig2}
    \end{figure}
    
    \begin{figure}[ht]
        \centering
        \begin{subfigure}{\textwidth}
            \centering
            \begin{overpic}[page=31,scale=0.4]{graphs.pdf}
             \put (0,13) {$\Lambda$}
            \end{overpic}
            \caption*{$C_0$ -- the initial cycle. We will be looking for another cycle that also contains the \emph{green} edge. Which implies, in the case of this graph family, not using the edge marked red. We distinguish one special vertex of the cap -- $\Lambda$.}
        \end{subfigure}
        \\[12pt]
        \begin{subfigure}{\textwidth}
            \centering
            \begin{overpic}[page=32,scale=0.4]{graphs.pdf}
             \put (0,13) {$\Lambda$}
            \end{overpic}
            \caption*{$C_1$ -- the final cycle. Edges used in the pac are uniquely determined by the parity of $n$.}
        \end{subfigure}
        \caption{Two (out of three) Hamiltonian cycles in graph $G_n$.}
        \label{fig:p0p1}
    \end{figure}
    
    \section{Main result}
    
    The goal of this paper is to establish the following theorem.
    
    \begin{thm}\label{thm:main}
        Let $n \in \mathbb{N}$, $n \ge 3$, and consider the graph $G_{n}$. Let $C_0$ and $C_1$ be the Hamiltonian cycles in $G_n$ shown in \cref{fig:p0p1}.
        The Thomason's algorithm in $G_n$ starting with $C_0$ and green edge in \cref{fig:p0p1}, terminates with $C_1$ and takes $\Theta(c^{n})$ steps, where $ c>1 $ is some constant ($c \approx 1.3953...$).
    \end{thm}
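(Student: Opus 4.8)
The plan is to express the number of lollipop steps as a quantity controlled by a fixed-size transfer matrix running along the chain of gadgets, and then to read off the exponential rate from its dominant eigenvalue. First I would make precise that, by \cref{thm:thomason}, the deterministic lollipop process started at $C_0$ traces the unique path of the component of $C_0$ in $\koppa(G_n)$, so that the number of steps is exactly the length of that path. Separately I would verify that the far endpoint of this component is $C_1$: since $G_n$ has exactly three Hamiltonian cycles and the green edge lies on precisely two of them, and since the lollipop operation preserves the green edge while the trajectory never uses the red edge (as indicated in \cref{fig:p0p1}), the walk is forced to terminate at $C_1$. It then remains to count the intermediate oriented Hamiltonian paths visited.

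Next I would classify how an oriented Hamiltonian path can enter, cross, and leave a single gadget, together with the way two consecutive gadgets are stitched at their shared interface. Because each gadget contributes only two vertices and a bounded number of boundary edges, there are only a small constant number of admissible \emph{interface states}. The crucial structural claim is that when the free endpoint of the current path lies in or adjacent to gadget $k$, the forced lollipop move is determined by, and affects only, the local interface state, advancing or retreating the endpoint along the chain in a predictable way; long-range interactions are excluded by $3$-connectivity and the rigidity of the cubic structure.

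Exploiting this locality, I would set up a vector-valued recurrence over the interface states that records the cost of sweeping the free endpoint once through gadgets $k, \dots, n$. The repetitiveness of the chain makes passage through each additional gadget equal to multiplication by a single fixed transfer matrix $M$, so that the step count $T_n$ equals, up to bounded boundary corrections from the cap and the pac, a fixed linear functional of $M^{n}$ applied to an initial state vector; in particular $T_n$ obeys a linear recurrence of order equal to the size of $M$. The growth of $M^{n}$ is then governed by the spectral radius $\rho(M)$, and I would set $c = \rho(M)$. Verifying that $c$ is a simple real dominant eigenvalue guarantees that its contribution does not cancel, upgrading the bound from $O(c^{n})$ to $\Theta(c^{n})$; a direct numerical evaluation gives $c \approx 1.3953$, whose square root $\approx 1.1812$ is the per-vertex base quoted in the abstract, since $|V(G_n)| = 2n + 6$.

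The hard part, I expect, will be the second and third steps: rigorously pinning down the finite list of interface states and proving that the lollipop trajectory decomposes multiplicatively along the chain, so that one transfer matrix genuinely accounts for every step. Alongside this, the lower bound must be secured by ruling out cancellation of the dominant mode, and the termination argument must confirm that the process ends precisely at $C_1$ rather than returning to $C_0$ or stalling elsewhere.
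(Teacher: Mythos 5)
Your high-level skeleton (a finite list of local ``interface states'' for each gadget, a linear recurrence over the chain, and the constant $c\approx 1.3953$ read off as a dominant root) matches the paper's in spirit, and your termination argument is essentially the paper's own: by \cref{thm:thomason} the walk must end at another cycle through the green edge, and $C_1$ is the only candidate. But there is a genuine gap exactly at the point you flag as the hard part, and it is not a technicality. The claim that the step count $T_n$ is, up to boundary corrections, a fixed linear functional of $M^n$ for a single transfer matrix is not justified and is not how the count actually arises. The lollipop trajectory is one long deterministic walk whose free endpoint advances and retreats with highly variable excursion depth; it does not decompose into independent ``passages'' through gadgets, so ``the cost of sweeping once through gadgets $k,\dots,n$'' is not a well-defined multiplicative quantity. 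What the paper proves instead is a \emph{counter} statement: the Hamiltonian paths from $\Lambda$ ending in the pac (``rightmost paths'') are in bijection with the words of $\Prefx_n(\{\gadget{WRX},\gadget{PQU},\gadget{WSQU}\}^{*})$, and the algorithm visits \emph{every} such word \emph{exactly once}, in a specific monotone order (lexicographic-like, with the order reversed below the symbol corresponding to $\gadget{WRX}$). This is established by local-dynamics lemmas (\nameref{lem:init}, \nameref{lem:bounce}, \nameref{lem:fill}). A transfer matrix does appear, but only to count the words of this regular language — the recurrence $a_k = 2a_{k-3} + a_{k-4}$ with dominant root of $z^4 + 2z^3 - 1$ — while the lollipops \emph{between} consecutive rightmost paths are handled by a separate amortisation argument (geometric, as when incrementing a binary counter), not folded into the same matrix.

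Relatedly, you misdiagnose where the lower bound lives. It is not a spectral issue of ruling out cancellation of the dominant eigenvalue of $M$; it is the combinatorial fact that the deterministic walk cannot shortcut from the least word ($\base{A}\base{A}\cdots\base{A}$) to the greatest ($\base{G}\base{G}\cdots\base{G}$) but is forced through all $\Theta(c^n)$ intermediate words, because consecutive rightmost paths in the trajectory are consecutive in the order. Without that ordering argument, locality of the moves yields at best an upper bound of the form ``steps at most (number of configurations) times $O(n)$'' and no exponential lower bound at all. Any rigorous completion of your plan would have to reconstruct the bouncing/conducting classification of the number patterns and the ordering of rightmost paths, at which point the spectral-radius framing becomes a repackaging of the paper's word count rather than an alternative route to it.
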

    Taking the square root of the constant $c$ from \cref{thm:main} (as the number of vertices in $G_n$ grows like $2n$), we get the base of the exponent from the abstract.
    
    In the proof of \cref{thm:main} we will be considering paths arising during Thomason's algorithm.
    To analyse them efficiently we first introduce notation to describe how a Hamiltonian path may pass through a single gadget. We focus only on the paths starting at $\Lambda$ and using the green edge.
    Given these constraints, we consider two categories of patterns: \emph{letter patterns} -- with an end of the path on each side of the gadget, and \emph{number patterns} -- with an end of the path inside the gadget. In the case with both ends on the left side, the choice of two edges used by the path uniquely determines the pattern and this behaviour propagates all the way to the pac.
    
    Letter patterns are presented by a list of cases in \cref{fig:letter_states}.
    These are all possible ways a Hamiltonian path can start in the cap, pass through the gadget, and end in the right part of the graph.
    One can verify that this list is complete in the following way: 
    the path can either use the edge inside the gadget or not; then choosing where the path first enters the gadget and where it last leaves the gadget, uniquely determines the way the path traverses the gadget, out of which 6 are Hamiltonian, giving in total 12 patterns.
    The pattern $\gadget{Y}$ cannot follow neither any other pattern nor a cap (see \cref{fig:aandaa}), so it never arises during the algorithm and is henceforth disregarded.
    All number patterns are covered by \cref{fig:number_states}.
    The edge inside the gadget can be used -- in this case specifying the endpoint completely determines how the path must pass through the gadget. If the edge is unused, choosing how the path returns from the left side of the graph forces uniquely the path.
    
    \begin{figure}[b]
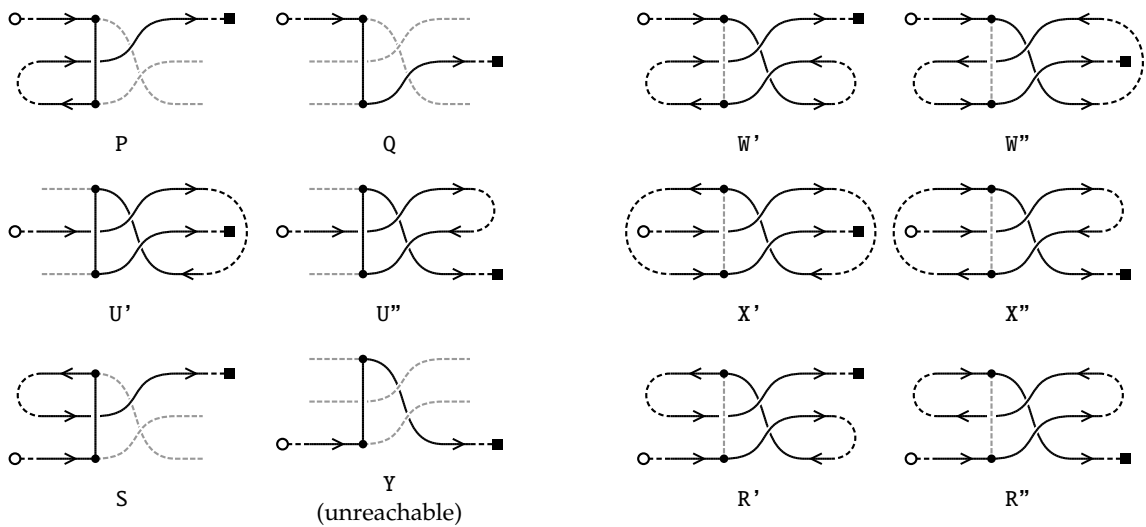

        \begin{center}
        \kawalek{5}{$\gadget{P}$} \hspace{-10pt}
        \kawalek{6}{$\gadget{Q}$} \hspace{25pt}
        \kawalek{13}{$\gadget{W'}$} \hspace{-10pt}
        \kawalek{9}{$\gadget{W''}$}
        \kawalek{8}{$\gadget{U'}$} \hspace{-10pt}
        \kawalek{7}{$\gadget{U''}$} \hspace{25pt}
        \kawalek{14}{$\gadget{X'}$} \hspace{-10pt}
        \kawalek{12}{$\gadget{X''}$}
        \kawalek{11}{$\gadget{S}$} \hspace{-10pt}
        \kawalek{16}{$\gadget{Y}$\\(unreachable)} \hspace{25pt}
        \kawalek{15}{$\gadget{R'}$} \hspace{-10pt}
        \kawalek{10}{$\gadget{R''}$}
        \end{center}
        \vspace{-1.3em}
        \caption{All possible \emph{letter patterns}. The empty circle and the black square denote respectively the beginning and the end of the path.}
        \label{fig:letter_states}
    \end{figure}

    \begin{figure}[ht!]
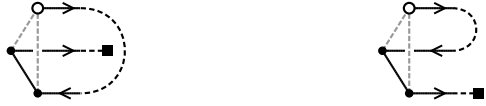

        \centering
        \begin{subfigure}{.3\textwidth}
            \centering
            \includegraphics[page=40,scale=0.4]{graphs.pdf}
            \label{fig:aandaa:a}
        \end{subfigure}
        \begin{subfigure}{.3\textwidth}
            \centering
            \includegraphics[page=41,scale=0.4]{graphs.pdf}
            \label{fig:aandaa:aa}
        \end{subfigure}
        \caption{All (two) possible ways a Hamiltonian path starting at $\Lambda$, using the green edge and not ending in the cap, can pass through the cap.}
        \label{fig:aandaa}
    \end{figure}
    
    \begin{figure}[ht!]
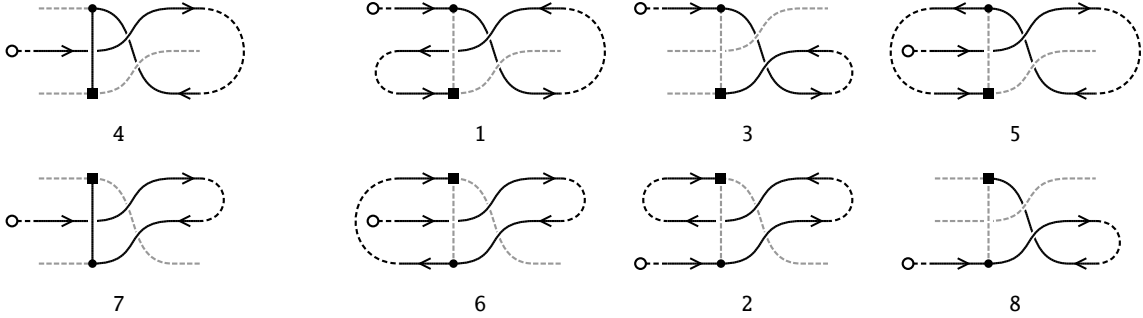

        \begin{center}
            \kawalek{20}{$\gadget4$} \hspace{25pt}
            \kawalek{17}{$\gadget1$} \hspace{-10pt}
            \kawalek{19}{$\gadget3$} \hspace{-10pt}
            \kawalek{21}{$\gadget5$}
            \kawalek{23}{$\gadget7$} \hspace{25pt}
            \kawalek{22}{$\gadget6$} \hspace{-10pt}
            \kawalek{18}{$\gadget2$} \hspace{-10pt}
            \kawalek{24}{$\gadget8$}
        \end{center}
        \vspace{-1.3em}
        \caption{
        All possible \emph{number patterns}.
        }
        \label{fig:number_states}
    \end{figure}
    
    We will call a Hamiltonian path in $G_n$ starting at $\Lambda$ a \emph{rightmost path}, if its other end is a vertex of the pac. These paths are central to our analysis of the algorithm's behaviour on the graph \(G_n\).
    
    Consider a Hamiltonian path in $G_n$ that begins in the vertex $\Lambda$.
    We assign one of the letter patterns to each of the gadgets to the left of the path’s endvertex.
    Observe that the way our path passes through the gadgets, up to the one containing endvertex, is uniquely encoded by this word. When describing a rightmost path, each such word encodes either one or two Hamiltonian paths, depending whether it ends on $\gadget{P}$, $\gadget{Q}$ or $\gadget{S}$ (in which case, there are two), or not (and there is only one).
    To understand better why this happens, note that distinction between pairs \gadget{U'} -- \gadget{U''},
    \gadget{W'} -- \gadget{W''},
    \gadget{X'} -- \gadget{X''},
    and \gadget{R'} -- \gadget{R''}
    is dependent only on the edges used in the part of the graph to the right of the considered gadget. This is because gadgets in one pair are exactly equal when considered as edge sets.
    So in the case of a rightmost path, they differ only in how the path traverses the pac, and there is a lollipop operation that maps between two possibilities. 
    Thus we map
    $\gadget{U'}, \gadget{U''} \mapsto \gadget{U}$,\quad 
    $\gadget{W'}, \gadget{W''} \mapsto \gadget{W}$,\quad
    $\gadget{X'}, \gadget{X''} \mapsto \gadget{X}$,\quad 
    and $\gadget{R'}, \gadget{R''} \mapsto \gadget{R}$.
    These labels will be used through the paper.
    
    We now introduce three lemmas, which we will use later in the proof of \cref{thm:main}.
    
    \begin{lem}[Counter Initialisation Lemma]\label{lem:init}
        Consider the two cycles $C_0$ and $C_1$ (shown in the~\cref{fig:p0p1}). Then Thomason's algorithm starting with $C_0$ terminates with $C_{1}$.
        Moreover, the first rightmost path encountered during such algorithm's run is a path described by a prefix of the string $\gadget{PQU}\hspace{2pt}\gadget{PQU}\hspace{2pt}\gadget{PQU}$...\hspace{0.7pt}, and the last rightmost path (before reaching $C_1$) is a path described by a prefix of the string $\gadget{WSQU}\hspace{2pt}\gadget{WSQU}\hspace{2pt}\gadget{WSQU}...$\hspace{0.7pt}.
        
    \end{lem}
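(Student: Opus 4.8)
The plan is to prove the three assertions—termination at $C_1$, the shape of the first rightmost path, and the shape of the last one—separately, handling the two rightmost-path claims by a common method after a reversibility reduction. Termination I would obtain from \cref{thm:thomason} together with the fact that $G_n$ has exactly three Hamiltonian cycles. By \cref{thm:thomason} the number of Hamiltonian cycles through the green edge is even; since $C_0$ and $C_1$ are two of them and only three cycles exist, the green edge lies in exactly $C_0$ and $C_1$ (the third cycle must use the red edge and so, by the observation recorded in \cref{fig:p0p1}, avoids the green one). Lollipopping preserves the first edge and the start vertex $\Lambda$, so the component of $\koppa(G_n)$ containing $C_0$ consists solely of oriented paths beginning with the green edge, and its two degree-$1$ endpoints are exactly the Hamiltonian cycles through the green edge oriented from $\Lambda$. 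As there are only two such cycles, they must be the two ends of this single path component, and the run started at $C_0$ therefore walks the component to its opposite end and halts at $C_1$.

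To address the last rightmost path I would use that lollipopping is an involution along a path component: the run from $C_0$ to $C_1$, read backwards, is precisely the run the algorithm produces when started at $C_1$ (at a degree-$1$ endpoint the next path is unique, so the bookkeeping of the previous path poses no ambiguity). Oriented from $\Lambda$ along the green edge, $C_1$ has its free end at a neighbour of $\Lambda$, which lies near the cap and not in the pac; hence $C_1$ is not itself a rightmost path, and the last rightmost path reached before $C_1$ in the forward run coincides with the first rightmost path met when the algorithm starts at $C_1$. Both rightmost-path claims thus reduce to a single task: starting from one of the two cycles, follow the opening lollipops until the free end first enters the pac, and read off the induced word of letter patterns.

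To execute this task I would first record the path representation of the chosen cycle and its forced opening lollipop, using \cref{fig:aandaa} to fix how the path departs the cap. The structural heart of the argument is that, so long as the free end lies strictly left of the pac, every lollipop is forced and drives the free end one gadget rightward, while the portion already threaded is recorded by a unique letter pattern from \cref{fig:letter_states} (with \cref{fig:number_states} bookkeeping the intermediate configurations in which the free end sits inside a gadget). This rightward march is therefore governed by a small deterministic transition system on the identical gadgets: the state carried by the advancing end fixes the emitted letter and the next state, so the emitted word is eventually periodic. Seeding the system with the cap behaviour of $C_0$ yields the period-three output $\gadget{PQU}$, whereas seeding it with that of $C_1$ yields the period-four output $\gadget{WSQU}$; truncating each to the $n$ gadgets gives the two advertised prefixes, and the march stops exactly when the end first reaches the pac.

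The main obstacle is the bounded but delicate verification underpinning this transition system. I must confirm that throughout the opening march every lollipop is genuinely forced—that the free end never sits where a second admissible continuation would split the run—that the end advances monotonically so that it cannot reach the pac earlier than the claimed first rightmost path, and that at each crossing the emitted letter is exactly the one dictated by \cref{fig:letter_states}. This is a finite case analysis, essentially one case per admissible (state, gadget-crossing) pair, but it carries all the real content; once the transition rules and the two cap seeds are pinned down, the periodic words and hence \cref{lem:init} follow at once.
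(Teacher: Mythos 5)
Your proposal is correct and takes essentially the same route as the paper: termination follows from \cref{thm:thomason} together with the fact that $G_n$ has exactly three Hamiltonian cycles, and both word claims follow from tracing the deterministic opening lollipops until the pattern recurs, which is exactly the paper's argument (carried out by explicit figures showing $\gadget{PQ7}$ reappearing after each lollipop---so the periodic block is emitted three gadgets per lollipop rather than one gadget at a time as you anticipate, a detail that does not affect the structure of your argument). Your explicit reversibility reduction, identifying the last rightmost path before $C_1$ with the first rightmost path of the run started at $C_1$, is a worthwhile clarification of what the paper leaves implicit in the phrase ``can be obtained in a similar manner.''
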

    \begin{proof}
        By the \cref{thm:thomason}, repeatedly applying the lollipop operation will lead us to another cycle containing the green edge.
        Since the only cycle other than $C_0$ satisfying that condition is $C_1$, we are done with the first part.
        
        We prove the second assertion by applying by hand a couple of lollipops starting from $C_0$. The remaining cases follow a similar recursive pattern.
        Lollipopping the $C_0$ leads to a rightmost path made of repeated sequence of patterns: $\gadget{P}, \gadget{Q}, \gadget{U}$. For illustration, in \cref{fig:init}, the first few lollipops are shown. 
        The description of the last rightmost path can be obtained in a similar manner.
        Note that this procedure works regardless of $n$, however it is possible for the last group of symbols (i.e. either \(\gadget{PQU}\) or \(\gadget{WSQU}\)) to be only a proper prefix of \(\gadget{PQU}\) or \(\gadget{WSQU}\). \qedhere
        
    \end{proof}
    \begin{figure}[t]
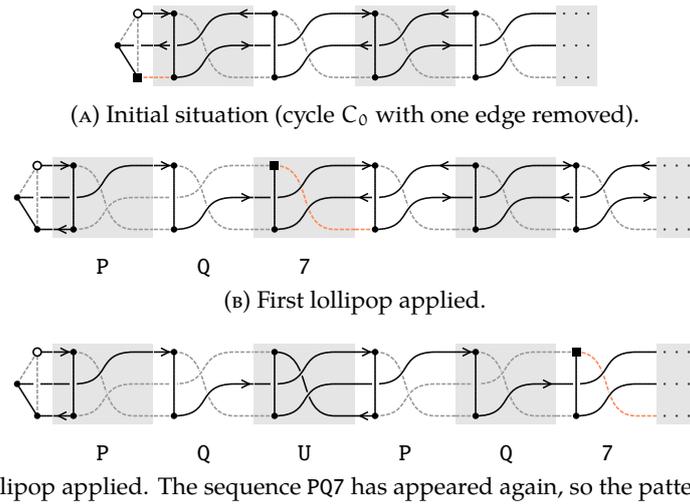

        \centering
        \begin{subfigure}{\textwidth}
            \centering
            \includegraphics[page=37,scale=0.3]{graphs.pdf}
            \caption{Initial situation (cycle $C_0$ with one edge removed).\\[10pt]}
            \label{fig:init:init}
        \end{subfigure}
        \begin{subfigure}{\textwidth}
            \centering
            \includegraphics[page=38,scale=0.3]{graphs.pdf}
            \caption*{\podpisik{P}\podpisik{Q}\podpisik{7}\podpisik{}\podpisik{}\podpisik{}}
            \vspace{-4pt}
            \caption{First lollipop applied.\\[10pt]}
            \label{fig:init:1}
        \end{subfigure}
        \begin{subfigure}{\textwidth}
            \centering
            \includegraphics[page=39,scale=0.3]{graphs.pdf}
            \caption*{\podpisik{P}\podpisik{Q}\podpisik{U}\podpisik{P}\podpisik{Q}\podpisik{7}}
            \vspace{-4pt}
            \caption{Second lollipop applied. The sequence \(\gadget{PQ7}\) has appeared again, so the pattern must repeat.}
            \label{fig:init:2}
        \end{subfigure}
        \caption{Initialisation of the first rightmost path starting from initial cycle (\subref{fig:init:init}). The next two steps, shown on \subref{fig:init:1} and \subref{fig:init:2}, prove the recursive formula for the first rightmost path's word.}
        \label{fig:init}
    \end{figure}
    
    We say that a number pattern $j$ is a \emph{bouncing pattern} if among both ways to lollipop it, the end of the Hamiltonian path after lollipopping is always to the right of the gadget. Likewise, $j$ is a \emph{conducting pattern} if the end of the Hamiltonian path after lollipopping ends on either side of the gadget, for the two ways one can lollipop a given Hamiltonian path.
    
    \begin{lem}[Bouncing Lemma] \label{lem:bounce}
        Among the number patterns (presented in \cref{fig:number_states}), $\gadget1$ and $\gadget2$ are bouncing patterns. Number patterns other than $\gadget1$ and $\gadget2$ are conducting patterns.
    \end{lem}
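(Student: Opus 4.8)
The plan is to prove the lemma by a direct case analysis over the eight number patterns of \cref{fig:number_states}, using the local description of the lollipop operation established in the proof of \cref{thm:thomason}. Recall from there that if $P = v_1, \dots, v_n$ is a Hamiltonian path which is not a cycle, then each neighbour $v_i$ of the endvertex $v_n$ other than its predecessor $v_{n-1}$ yields one lollipop, and the resulting path has new endvertex $v_{i+1}$, the successor of $v_i$ along $P$. Since every path under consideration starts at $\Lambda$, which lies in the cap and is therefore never adjacent to a vertex inside a gadget, neither reconnection target can equal $v_1$; hence both reconnection edges are genuinely available and each number pattern admits exactly two well-defined lollipops. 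The classification of a pattern as bouncing or conducting is then simply a matter of recording, for these two lollipops, the side of the gadget on which the new endvertex lands.

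For each number pattern I would first locate the endvertex inside the gadget together with its predecessor along the path, and then examine the two remaining edges incident to that endvertex. Each such edge leads either to the other gadget vertex or to one of the connection points on the left or right; taking one step along $P$ away from $v_1$ starting at that neighbour identifies where the new endvertex $v_{i+1}$ sits. Carrying this out for $\gadget1$ and $\gadget2$ should show that \emph{both} lollipops relocate the endvertex to the right of the gadget, so these two patterns are bouncing. For $\gadget3$ through $\gadget8$ the same computation should show that one lollipop moves the endvertex to the right while the other moves it to the left, so each of these six patterns is conducting. Since the left/right dichotomy in the definitions is exhaustive, one also checks in passing that in every case the successor $v_{i+1}$ leaves the gadget rather than remaining inside it.

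The only delicate point, and the step I expect to require the most care, is the bookkeeping. One must fix the orientation of each depicted path -- deciding which end is the $\Lambda$-side $v_1$ and which is the internal endvertex $v_n$ -- correctly single out the predecessor $v_{n-1}$ among the three edges at the endvertex, and then read off $v_{i+1}$ for each admissible reconnection. The subtlety is that $v_{i+1}$ is governed by the global direction of $P$ through $v_i$, not by purely local data at the gadget, so the orientation must be pinned down consistently from \cref{fig:number_states} before any result is recorded. Once this is done the argument reduces to sixteen elementary verifications (two per pattern), and the split into the two bouncing patterns $\gadget1$, $\gadget2$ and the six conducting patterns $\gadget3,\dots,\gadget8$ follows immediately; I do not anticipate any genuine difficulty beyond this careful reading of the figure.
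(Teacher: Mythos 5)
Your proposal is correct and takes essentially the same route as the paper: the paper's proof is exactly this exhaustive hand-verification of both lollipops for each of the eight number patterns (it works out patterns $\gadget{2}$ and $\gadget{4}$ explicitly and leaves the remaining cases to the reader), which is the case analysis you outline. One caveat on a side remark: your claim that $\Lambda$ ``is never adjacent to a vertex inside a gadget'' conflicts with the construction (each cap vertex has an edge into the next component); the availability of both lollipops for number patterns is instead guaranteed because an endvertex adjacent to $\Lambda = v_1$ would close a Hamiltonian cycle through the green edge, hence yield $C_0$ or $C_1$, whose corresponding paths end in the cap rather than in a gadget.
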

    \begin{proof}
    
        Again, the proof requires us to consider both ways one may lollipop the number patterns. This is easily done by hand. For illustrative purposes, we show the behaviour around $\gadget2$ and $\gadget4$ on \cref{fig:bouncing2,fig:bouncing4}, respectively. The reader is encouraged to verify the remaining cases. \qedhere
        
    \end{proof}
    \begin{figure}
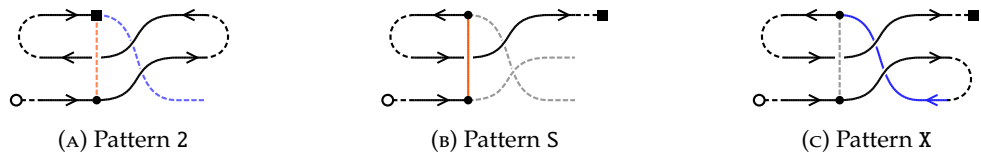

        \centering
        \begin{subfigure}{.3\textwidth}
            \centering
            \includegraphics[page=25,scale=0.4]{graphs.pdf}
            \caption{Pattern $\gadget{2}$}
            \label{fig:bouncing2:initial}
        \end{subfigure}
        \begin{subfigure}{.3\textwidth}
            \centering
            \includegraphics[page=26,scale=0.4]{graphs.pdf}
            \caption{Pattern $\gadget{S}$}
            \label{fig:bouncing2:red}
        \end{subfigure}
        \begin{subfigure}{.3\textwidth}
            \centering
            \includegraphics[page=27,scale=0.4]{graphs.pdf}
            \caption{Pattern $\gadget{X}$}
            \label{fig:bouncing2:green}
        \end{subfigure}
        \caption{Bouncing $\gadget{2}$. Starting from the pattern $\gadget{2}$ (\subref{fig:bouncing2:initial}) we can either lollipop using the orange edge (\subref{fig:bouncing2:red}), or the blue one (\subref{fig:bouncing2:green}). Either way, the path's end moves to the right.}
        \label{fig:bouncing2}
    \end{figure}
    
    \begin{figure}
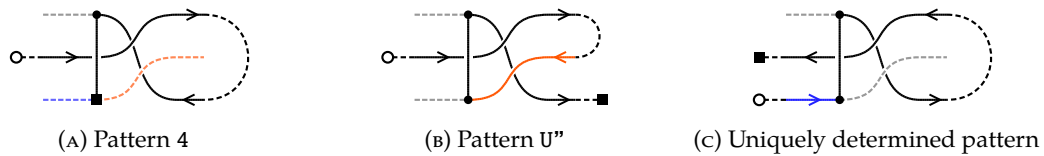

        \centering
        \begin{subfigure}{.3\textwidth}
            \centering
            \includegraphics[page=28,scale=0.4]{graphs.pdf}
            \caption{Pattern $\gadget{4}$}
            \label{fig:bouncing4:initial}
        \end{subfigure}
        \begin{subfigure}{.3\textwidth}
            \centering
            \includegraphics[page=29,scale=0.4]{graphs.pdf}
            \caption{Pattern $\gadget{U''}$}
            \label{fig:bouncing4:red}
        \end{subfigure}
        \begin{subfigure}{.3\textwidth}
            \centering
            \includegraphics[page=30,scale=0.4]{graphs.pdf}
            \caption{Uniquely determined pattern}
            \label{fig:bouncing4:green}
        \end{subfigure}
        \caption{Conducting $\gadget{4}$. Starting from the pattern $\gadget{4}$ (\subref{fig:bouncing4:initial}) we can either lollipop using the orange edge (\subref{fig:bouncing4:red}), or the blue one (\subref{fig:bouncing4:green}). In the first case, the path's end moves to the right, in the second case -- to the left.}
        \label{fig:bouncing4}
    \end{figure}
    
    \newpage
    \begin{lem}[Filling Lemma] \label{lem:fill}
        Consider a Hamiltonian path in $G_n$ starting at $\Lambda$ with endvertex in one of the gadgets in a bouncing pattern, i.e. $\gadget1$ or $\gadget2$. Then the two rightmost paths (arising from the two ways we can start lollipopping the path) in case of pattern \(\gadget1\) correspond to the words
        $w \hspace{2pt}
        \gadget{PQU}\hspace{2pt}
        \gadget{WSQU}\hspace{2pt}
        \gadget{WSQU}\hspace{2pt}
        \gadget{WSQU}
        ...$ and
        $w\hspace{2pt}
        \gadget{WRX}\hspace{2pt}
        \gadget{WSQU}\hspace{2pt}
        \gadget{WSQU}\hspace{2pt}
        \gadget{WSQU}
        ...$ where \(w\) denotes some common prefix. In the case of pattern $\gadget2$ the two paths are described by
        $w\hspace{2pt}
        \gadget{WSQU}\hspace{2pt}
        \gadget{PQU}\hspace{2pt}
        \gadget{PQU}\hspace{2pt}
        \gadget{PQU}
        ...$ and 
        $w\hspace{2pt}
        \gadget{WRX}\hspace{2pt}
        \gadget{PQU}\hspace{2pt}
        \gadget{PQU}\hspace{2pt}
        \gadget{PQU}
        ...$, where $w$ is some common prefix.
    \end{lem}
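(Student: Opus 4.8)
The plan is to recast the statement as a deterministic propagation of \emph{forward} lollipops and then reuse the recursive mechanism already established in \cref{lem:init}. The structural fact I would lean on is that a Hamiltonian path starting at $\Lambda$ is completely determined by the letter word $w$ recording the gadgets strictly to the left of its endvertex together with the number pattern occupied by the endvertex's gadget: as explained in the discussion of \cref{fig:number_states}, a number pattern fixes which boundary edges are used on its right, and this choice then propagates uniquely all the way to the pac. Hence lollipopping induces a well-defined \emph{forward step}: whenever the endvertex sits in gadget $k$ in a number pattern, a lollipop that moves the endvertex rightward converts gadget $k$ into a determined letter pattern and deposits a determined number pattern in gadget $k+1$, leaving the prefix $w$ untouched. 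I would organise the entire argument around iterating this forward step until the endvertex reaches the pac, at which moment we have a rightmost path whose word we read off.

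First I would treat the initial bounce. By \cref{lem:bounce}, both lollipops available at a bouncing pattern ($\gadget1$ or $\gadget2$) move the endvertex to the right, so they genuinely initiate the two branches named in the statement, and the common prefix $w$ is precisely the letter word of the gadgets to the left of the bouncing gadget. Working in the explicit local pictures, exactly as in the proof of \cref{lem:bounce}, I would record for each of $\gadget1$ and $\gadget2$ and each of the two lollipops both the letter left behind in gadget $k$ and the number pattern created in gadget $k+1$. This finite computation produces the distinguishing first blocks — $\gadget{PQU}$ versus $\gadget{WRX}$ for $\gadget1$, and $\gadget{WSQU}$ versus $\gadget{WRX}$ for $\gadget2$ — and, crucially, identifies the number pattern from which the subsequent periodic regime is entered.

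Next I would propagate toward the pac. Because the algorithm never reverses its previous move, and because by \cref{lem:bounce} every number pattern other than $\gadget1$ and $\gadget2$ is conducting — one of its two lollipops sending the endvertex left, the other right — the non-backtracking move is uniquely forced and always pushes the endvertex one gadget further right. Iterating the forward step therefore traces out a single word in each branch. I would then verify that the induced sequence of letters is eventually periodic, with repeating block $\gadget{WSQU}$ in the $\gadget1$ case and repeating block $\gadget{PQU}$ in the $\gadget2$ case. This is exactly the recursion already exhibited in \cref{lem:init}, where these same two periodic words arise as the first and last rightmost paths, so it suffices to check that the forward step cycles through the states emitting one full period of the block and that the post-bounce states of $\gadget1$ and $\gadget2$ feed into the appropriate cycle. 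Reading off the accumulated letters yields the claimed words, with the final block possibly truncated because the propagation halts the instant the endvertex enters the pac, exactly as in \cref{lem:init}.

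The main obstacle is the local bookkeeping: assembling the exact transition table for the forward step — which letter and which successor number pattern each lollipop produces — and confirming two properties it must enjoy. First, that after the initial bounce the orbit avoids the bouncing patterns $\gadget1$ and $\gadget2$ entirely, so that no further branching occurs and each branch is honestly a single word. Second, that the periodic tail meshes correctly with the pac, whose edge usage is forced by the parity of $n$ (see \cref{fig:p0p1}), so that the truncation of the last block is the only boundary effect. Once this finite case analysis is in place, the two claimed words follow by induction on the number of gadgets lying between the bouncing gadget and the pac.
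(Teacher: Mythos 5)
Your overall strategy---compile the finite table of rightward lollipop transitions, argue that after the initial bounce the non-backtracking rule forces the endvertex monotonically rightward through conducting patterns, iterate until the pac, and read off an eventually periodic word with a possibly truncated last block---is exactly the route the paper takes, so the skeleton is sound. One inaccuracy in your bookkeeping should be flagged, though it is self-correcting once you actually compute the table: a single lollipop does \emph{not} deposit one letter and advance the endvertex by one gadget. For instance, the rightward lollipop from pattern $\gadget{4}$ produces $\gadget{UWS3}$ (three letters emitted, endvertex three gadgets further right), and the second bounce from $\gadget{1}$ produces $\gadget{WR5}$. So your induction should be on lollipop steps, each emitting a variable-length block of letters, rather than on gadgets traversed one at a time.

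The genuine gap is in the case of pattern $\gadget{2}$. You claim the local computation at the bounce ``produces the distinguishing first blocks $\gadget{WSQU}$ versus $\gadget{WRX}$,'' with $w$ being the letter word of the gadgets strictly to the left of the bouncing gadget. That is false: the two bounces from $\gadget{2}$ yield $\gadget{SQ7}$ and $\gadget{R6}$, which iterate to words beginning $\gadget{SQU}(\gadget{PQU})^{*}$ and $\gadget{RX}(\gadget{PQU})^{*}$ after the prefix; no lollipop ever writes the leading $\gadget{W}$ of the claimed blocks into the bouncing gadget. That $\gadget{W}$ lives in the gadget immediately to the \emph{left} of the bouncing one, i.e.\ inside what you call $w$. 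To recover the statement as given, you need the additional structural observation (which the paper makes explicitly) that the only letter pattern that can directly precede a gadget in pattern $\gadget{2}$ is $\gadget{W'}$; hence the prefix necessarily ends in $\gadget{W}$ and can be rewritten as $w\gadget{W}$, turning your computed words into $w\,\gadget{WSQU}(\gadget{PQU})^{*}$ and $w\,\gadget{WRX}(\gadget{PQU})^{*}$. Without this step your argument proves only the weaker form with blocks $\gadget{SQU}$ and $\gadget{RX}$, which is not the form the lemma asserts (and not the form the proof of the main theorem relies on when it matches these words to consecutive elements of the order on $\mathcal{L}_n$).
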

    \begin{proof}
        We begin with a bouncing pattern. Observe, that all subsequent patterns that end a path we can obtain by lollipopping before reaching the pac, are conducting ones.
        These number patterns will eventually repeat as there are only finitely many of them. Therefore,
        the resulting word describing the rightmost path has a periodic suffix. To obtain it, it is enough to analyse the behaviour after a bounded number of lollipops.
        
        First, we observe that we have the following possible transitions when lollipopping a path ending in a bouncing pattern:
        $\omega\gadget{1} \mapsto \{\omega\gadget{P3}, \omega\gadget{WR5}\}$\; and\;
        $\omega\gadget{2} \mapsto \{\omega\gadget{R6}, \omega\gadget{SQ7}\}$.
        For the conducting patterns we consider only the lollipops that move the path's end to the right:
        $\omega\gadget{3} \mapsto \omega\gadget{Q4}$,\; 
        $\omega\gadget{4} \mapsto \omega\gadget{UWS3}$,\; 
        $\omega\gadget{5} \mapsto \omega\gadget{XWS3}$,\; 
        $\omega\gadget{6} \mapsto \omega\gadget{XPQ7}$,\; and\;
        $\omega\gadget{7} \mapsto \omega\gadget{UPQ7}$. Here $\omega$ denotes some common prefix.
        
        Combining these transitions we obtain the possible traces for $\omega\gadget{1}$ and $\omega\gadget{2}$:
        \begin{align*}
        \omega\gadget{1}
        &\mapsto \omega \gadget{P3}
         \mapsto \omega \gadget{PQ4}
         \mapsto \omega \gadget{PQUWS3}
         \mapsto \dots \\
        \omega\gadget{1}
        &\mapsto \omega \gadget{WR5}
         \mapsto \omega \gadget{WRXWS3}
         \mapsto \omega \gadget{WRXWSQ4}
         \mapsto \omega \gadget{WRXWSQUWS3}
         \mapsto \dots \\
        \omega\gadget{2}
        &\mapsto \omega \gadget{R6}
         \mapsto \omega \gadget{RXPQ7}
         \mapsto \omega \gadget{RXPQUPQ7}
         \mapsto \dots \\
        \omega\gadget{2} 
        &\mapsto \omega \gadget{SQ7}
         \mapsto \omega \gadget{SQUPQ7}
         \mapsto \dots
        \end{align*}
        For an illustration, we present the first case in \cref{fig:fill}.
        
        Observe that the only letter pattern that matches the pattern \(\gadget{2}\) on the left (i.e. directly preceding it) is~\(\gadget{W'}\).
        Thus, in this case the last symbol of $\omega$ must be $\gadget{W}$, which gives us the description from the statement of the lemma.
        
        \begin{figure}[h]
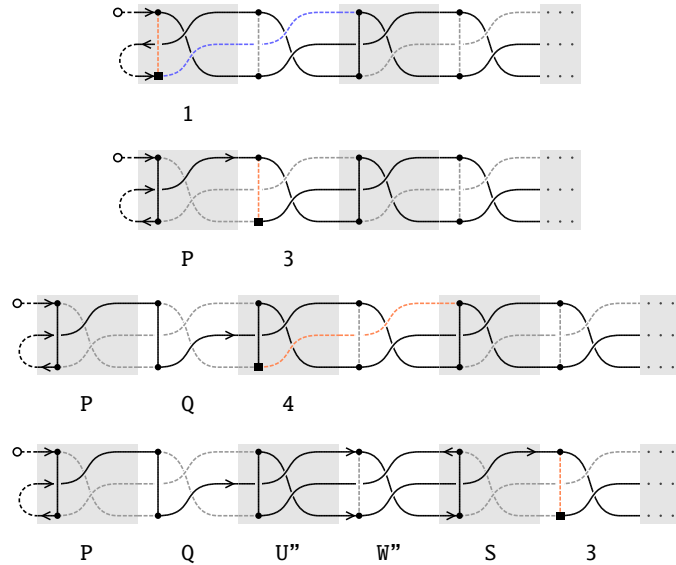

            \centering
            \begin{subfigure}{\textwidth}
                \centering
                \includegraphics[page=33,scale=0.3]{graphs.pdf}
                \caption*{\podpisik{1}\podpisik{}\podpisik{}\podpisik{}\\[10pt]}
            \end{subfigure}
            \begin{subfigure}{\textwidth}
                \centering
                \includegraphics[page=34,scale=0.3]{graphs.pdf}
                \caption*{\podpisik{P}\podpisik{3}\podpisik{}\podpisik{}\\[10pt]}
            \end{subfigure}
            \begin{subfigure}{\textwidth}
                \centering
                \includegraphics[page=35,scale=0.3]{graphs.pdf}
                \caption*{\podpisik{P}\podpisik{Q}\podpisik{4}\podpisik{}\podpisik{}\podpisik{}\\[10pt]}
            \end{subfigure}
            \begin{subfigure}{\textwidth}
                \centering
                \includegraphics[page=36,scale=0.3]{graphs.pdf}
                \caption*{\podpisik{P}\podpisik{Q}\podpisik{U''}\podpisik{W''}\podpisik{S}\podpisik{3}\\[2pt]}
            \end{subfigure}
            \caption{Subsequent Thomason's steps starting from the path ending in pattern $\gadget{1}$, using edges marked orange for lollipopping.
            }
            \label{fig:fill}
        \end{figure}
        \begin{figure}[b]
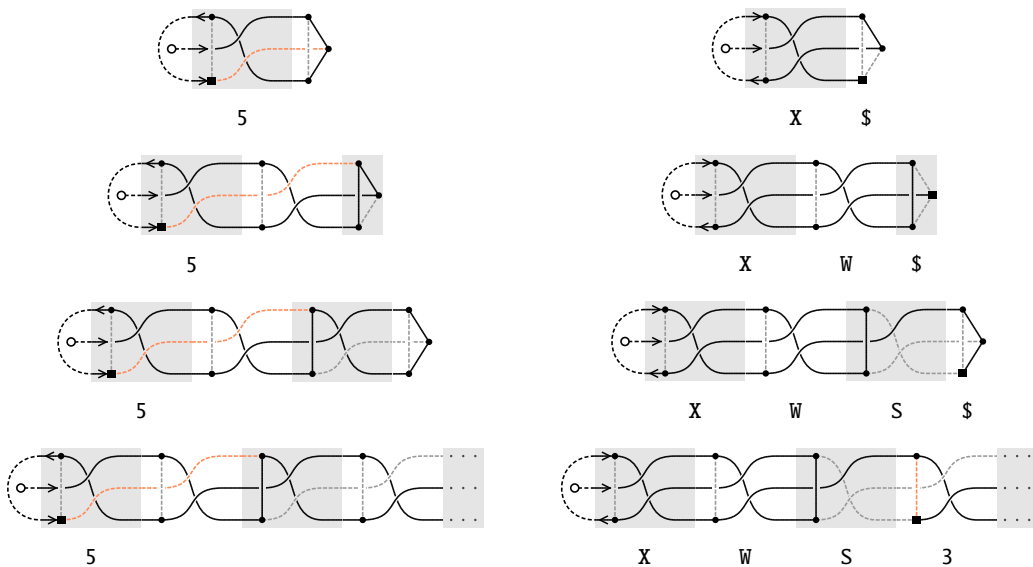

            \centering
            \begin{subfigure}{0.45\textwidth}
                \centering
                \includegraphics[page=51,scale=0.3]{graphs.pdf}
                \caption*{\podpisik{5}\\[10pt]}
            \end{subfigure}
            \begin{subfigure}{0.45\textwidth}
                \centering
                \includegraphics[page=52,scale=0.3]{graphs.pdf}
                \caption*{\podpisik{}\podpisik{X}\podpisikpac\\[10pt]}
            \end{subfigure}
            \begin{subfigure}{0.45\textwidth}
                \centering
                \includegraphics[page=53,scale=0.3]{graphs.pdf}
                \caption*{\podpisik{5}\podpisik{}\\[10pt]}
            \end{subfigure}
            \begin{subfigure}{0.45\textwidth}
                \centering
                \includegraphics[page=54,scale=0.3]{graphs.pdf}
                \caption*{\podpisik{}\podpisik{X}\podpisik{W}\podpisikpac\\[10pt]}
            \end{subfigure}
            \begin{subfigure}{0.45\textwidth}
                \centering
                \includegraphics[page=55,scale=0.3]{graphs.pdf}
                \caption*{\podpisik{5}\podpisik{}\podpisik{}\\[10pt]}
            \end{subfigure}
            \begin{subfigure}{0.45\textwidth}
                \centering
                \includegraphics[page=56,scale=0.3]{graphs.pdf}
                \caption*{\podpisik{}\podpisik{X}\podpisik{W}\podpisik{S}\podpisikpac\\[10pt]}
            \end{subfigure}
            \begin{subfigure}{0.45\textwidth}
                \centering
                \includegraphics[page=57,scale=0.3]{graphs.pdf}
                \caption*{\podpisik{5}\podpisik{}\podpisik{}\podpisik{}\\[2pt]}
            \end{subfigure}
            \begin{subfigure}{0.45\textwidth}
                \centering
                \includegraphics[page=58,scale=0.3]{graphs.pdf}
                \caption*{\podpisik{X}\podpisik{W}\podpisik{S}\podpisik{3}\\[2pt]}
            \end{subfigure}
            \caption{All possible settings in which we may encounter the number pattern \(\gadget{5}\), together with the corresponding results of lollipopping that moves the path's endvertex to the right.
            \\[10pt]  
            }
            \label{fig:filling5}
        \end{figure}
        
        Finally, we also need to verify that when the end of the path eventually reaches the pac, the resulting description conforms to the repetitive formula -- that is, it is a prefix of the expected string. We do that by noting that
        $\omega \gadget{1} \mapsto \{
          \omega \gadget{P\$},
          \omega \gadget{W\$},
          \omega \gadget{WR\$}
        \}$,\;
        $\omega \gadget{2} \mapsto \{
          \omega \gadget{R\$},
          \omega \gadget{S\$},
          \omega \gadget{SQ\$}
        \}$,\;
        $\omega \gadget{3} \mapsto \{
          \omega \gadget{Q\$}
        \}$,\;
        $\omega \gadget{4} \mapsto \{
          \omega \gadget{U\$},
          \omega \gadget{UW\$},
          \omega \gadget{UWS\$}
        \}$,\;
        $\omega \gadget{5} \mapsto \{
          \omega \gadget{X\$},
          \omega \gadget{XW\$},
          \omega \gadget{XWS\$}
        \}$,\;
        $\omega \gadget{6} \mapsto \{
          \omega \gadget{X\$},
          \omega \gadget{XP\$},
          \omega \gadget{XPQ\$}
        \}$,\; and \;
        $\omega \gadget{7} \mapsto \{
          \omega \gadget{U\$},
          \omega \gadget{UP\$},
          \omega \gadget{UPQ\$}
        \}$\;
        are also valid lollipops, where $\gadget{\$}$ denotes that the encoded path has an endvertex in the pac. For illustration we present all transitions from $\omega\gadget{5}$ in \cref{fig:filling5}.
    \qedhere
    
    \end{proof}

    We are now prepared to prove the \cref{thm:main}.
    
    \begin{proof}[Proof of \cref{thm:main}]
    
    Consider a rightmost Hamiltonian path in the graph $G$. Each gadget is thus assigned a letter (see \cref{fig:letter_states}), and the path is assigned a word over the alphabet
    $\Sigma = \{\gadget{P}, \gadget{Q}, \gadget{U}, \gadget{W}, \gadget{R}, \gadget{X}, \gadget{S}\}$.
    Obviously, not all words in $\Sigma^{n}$ constitute a valid path, and to understand which do, we analyse the way each pattern behaves on its left and right edge cut (and which endpoints need to be connected to which). This analysis is compactly presented by introducing an automaton in \cref{fig:automaton_zwykly}. For the definitions and conventions pertaining to finite automata and regular languages used here see \cite{automata}.
    
    \begin{figure}[h]
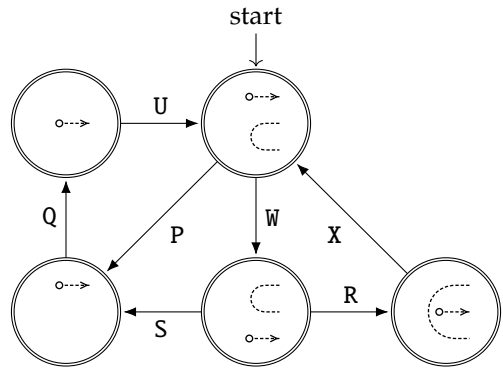

        \centering
        \begin{tikzpicture}[arrows={-Latex},shorten >=1pt,node distance=2.5cm,on grid,auto]
            
            \node[state, accepting] (uv) {\includegraphics[page=47,scale=0.25]{graphs.pdf}};
            \node[state, initial, initial where = above, accepting] (aa) [right = of uv] {\includegraphics[page=48,scale=0.25]{graphs.pdf}};
            
            \node[state, accepting] (q) [below = of uv] {\includegraphics[page=42,scale=0.25]{graphs.pdf}};
            \node[state, accepting] (r) [right = of q] {\includegraphics[page=43,scale=0.25]{graphs.pdf}};
            \node[state, accepting] (x) [right = of r] {\includegraphics[page=44,scale=0.25]{graphs.pdf}};
            
            \path (r) edge node {$\gadget{S}$} (q);
            \path (uv) edge node [above] {$\gadget{U}$} (aa);
            \path (aa) edge node {$\gadget{W}$} (r);
            \path (aa) edge node {$\gadget{P}$} (q);
            \path (q) edge node {$\gadget{Q}$} (uv);
            \path (r) edge node {$\gadget{R}$} (x);
            \path (x) edge node {$\gadget{X}$} (aa);
            
        \end{tikzpicture}
        \caption{Automaton accepting exactly descriptions of rightmost Hamiltonian paths for any value of $n$.
        Figures in nodes represent the way the path traverses the graph up to the cut between considered gadgets.
        }
        \label{fig:automaton_zwykly}
    \end{figure}
    
    Let's call the language of this automaton $ {\mathcal{J}} $.
    Since this language includes words of arbitrary lengths, and we are only interested in words of length $n$ exactly, let $\mathcal{J}_{n} = {\mathcal{J}} \cap \Sigma ^{n}$.
    Taking a closer look at the automaton, we may notice that
    \begin{equation*}
    \mathcal{J}_{n} = \Prefx_{n}( 
    \{
        \gadget{WRX},
        \gadget{PQU},
        \gadget{WSQU}
    \}
    ^{*})
    \text{,}
    \end{equation*} 
    where $\Prefx_n(L)$ is a language consisting of all prefixes of length $n$ exactly of words in the language $L$.
    
    Let us now consider a smaller alphabet:
    $\Gamma = \left \{ \base{A}, \base{T}, \base{G}\right \}$
    with the map 
    $ \gadget{PQU} \mapsto \base{A} $, $ \gadget{WRX} \mapsto \base{T} $, and $ \gadget{WSQU} \mapsto \base{G} $.
    We also define a partial order $<$ on \(\Gamma^{*}\)  inductively as $ \base{A} < \base{T} < \base{G}$ and 
    \[
        \beta u < \gamma w  \iff    \begin{cases}
                                        \beta < \gamma  & \text{ if } \beta \neq \gamma\text,\\
                                        u < w           & \text{ if } \beta = \gamma \neq \base{T}\text,\\
                                        w < u           & \text{ if } \beta = \gamma = \base{T}\text,
                                    \end{cases}
    \]
    where $\beta, \gamma \in \Gamma $ and $u, w \in \Gamma^{*}$.
    
    It may very well happen that our path fails to be divided evenly by the words $\gadget{WRX}$, $\gadget{PQU}$, $\gadget{WSQU}$. More precisely, the path can be seen as a concatenation of these words (i.e. $\{
        \gadget{WRX},
        \gadget{PQU},
        \gadget{WSQU}
    \}^{*}$), concatenated once more with a prefix of one of these words. This is a consequence of the definition of $\mathcal{J}_{n}$. We need to handle the case when the path ends with only a proper prefix of one of these words. If this remaining suffix of the path contains two or more gadgets, we already know which of $\base{A}$, $\base{T}$, $\base{G}$ the last symbols must correspond to, as the underlying words do not share more than one initial symbol.
    One-letter unmatched suffixes would make such mapping ambiguous. To remedy this, we introduce yet another symbol -- $\base{C}$ to describe suffix $\gadget{W}$, while keeping $\base{A}$ to describe suffix \gadget{P}. Now we need to extend the order with $\base{A} < \base{C}$, what makes it consistent and exhaustive as we never need to compare $\base{T}$ or $\base{G}$ with $\base{C}$. Therefore any rightmost path can be described by a word in the language \(\mathcal{K}\) defined as follows.
    \[
     \mathcal{K} = \{\base{A},\base{T},\base{G}\}^{*}\{\base{C},\varepsilon\}
    \]

    Addition of $\base{C}$ makes the definition of the map quite verbose, so we include the following formal, inductive formula of $\varphi \colon \mathcal{J} \rightarrow \mathcal{K}$:
    \[
        \varphi(\lambda) = \begin{cases}
        \base{A} &\text{if }  \lambda  \in \{\gadget{P}, \gadget{PQ}, \gadget{PQU}\}\text,
        \\
        \base{T} &\text{if }  \lambda  \in \{\gadget{WR}, \gadget{WRX}\}\text,
        \\
        \base{G} &\text{if } \lambda  \in \{\gadget{WS}, \gadget{WSQ}, \gadget{WSQU} \}\text,
        \\
        \base{C} &\text{if } \lambda \in \{\gadget{W}\}\text,
        \\
        \base{A}\varphi(\omega) & \text{if } \lambda = \gadget{PQU} \omega
        \text,\; \omega \neq \varepsilon\text,
        \\
        \base{T} \varphi(\omega) & \text{if } \lambda = \gadget{WRX} \omega  
        \text,\; \omega \neq \varepsilon\text,
        \\ 
        \base{G} \varphi(\omega) & \text{if } \lambda = \gadget{WSQU} \omega  
        \text,\; \omega \neq \varepsilon\text.
        \end{cases}
    \]
    Finally, let
    \[
        \mathcal{L}_{n} = \varphi (\mathcal{J}_{n}) \text{.}
    \]
    
    Observe that $\mathcal{L}_n \subseteq \mathcal{K}$, and while order $<$ is not linear in $\mathcal{K}$, it is linear in $\mathcal{L}_{n}$.
    
    By the \nameref{lem:init}, the first rightmost path upon starting Thomason's algorithm on $C_0$ corresponds to the word $\base{AAA}...\base{A}$ -- the least word in $\mathcal{L}_{n}$ with respect to the order $<$, and the last rightmost path before we get to $C_1$ corresponds to the word $\base{GGG}...\base{G}$ (or possibly $\base{GGG}...\base{GC}$ if \(n\) is congruent to \(1\) modulo \(4\)) -- the greatest word in $\mathcal{L}_{n}$.
    
    Let us consider all rightmost paths that arise during the algorithm \(p_1, p_2, \dots, p_t\), in the order of appearance.
    By the observation above, \(p_1\) and \(p_t\) are encoded by the least and the greatest words in \(\mathcal{L}_n\) respectively.
    Consider now \(p_i\) and \(p_{i+1}\), where \(i \in \{1,2,\dots, t - 1\}\).
    Suppose that both paths are encoded by the same word in $\mathcal{L}_n$. Recall that there are two distinct ways to traverse the pac, and the corresponding paths, encoded by the same word, are always neighbours in \(\koppa(G_n)\). This implies that $p_{i+1}$ is an immediate successor of $p_i$ during the Thomason's algorithm.
    Otherwise $p_i$ and $p_{i+1}$ are encoded by different words, and all paths that appear during algorithm between $p_i$ and $p_{i+1}$ are ending in a number pattern. By the \nameref{lem:bounce}, there is a path between $p_i$ and $p_{i+1}$ that ends either in the pattern \(\gadget{1}\) or \(\gadget{2}\).
    
    Now the \nameref{lem:fill} implies that the paths $p_i$ and $p_{i+1}$ are of the form $w \gadget{PQU}\left( \gadget{WSQU}\right)^*$ and $ w \gadget{WRX}\left( \gadget{WSQU}\right)^*$ in the case of pattern $\gadget{1}$, or $ w  \gadget{WSQU} \left(\gadget{PQU}\right)^*$ and $ w \gadget{WRX}\left( \gadget{PQU}\right)^*$ otherwise, where $w$ is some common prefix.
    Clearly the words in $\mathcal{L}_{n}$ corresponding to these paths form consecutive pairs with respect to the order $<$, and so while running the algorithm we move either to an immediate successor of the path $p_i$ with respect to the order induced via $\varphi$ from $<$.

    This proves that while running Thomason's algorithm we visit all words in $\mathcal{L}_{n}$ exactly in order $<$.
    Observe that we make at most $2n$ lollipops between any two distinct rightmost paths.
    Hence, to count the number of steps of the algorithm, up to a factor linear in $n$, it suffices to count the number of words in $\mathcal{L}_{n}$.
    
    Let $a_k$ be the number of words in the language $\mathcal{L}_{k}$. One can easily verify the following recurrence
    \begin{align*}
        a_0 &= 1\text,\\
        a_1 &= 2\text,\\
        a_2 &= 3\text,\\
        a_3 &= 3\text,\\
        a_k &= 2 a_{k-3} + a_{k-4}  \hspace{20pt} \text{  for  }k \geq 4\text{.}
    \end{align*}
    
    We use method of Generating Functions to derive the asymptotic behaviour of this sequence, see e.g. \cite{analytic} for an introduction of this method.
    Consider $\mathcal{A}(z) = \sum_{k=0}^{\infty} a_k z^k$. We get the functional equation
    \[
        \mathcal{A}(z) = 1 + 2 z + 3 z^2 + z^3 + 2 z^3 A(z) + z^4 A(z)\text{,}
    \]
    which one can easily solve to
    \[
        \mathcal{A}(z) = \frac{1 + 2 z + 3 z^2 + z^3}{1 - 2 z^3 - z^4}\text{.}
    \]
    
    Thus, we get that asymptotically there are $\Theta(c^n)$ such words, where $\frac{1}{c}$ is the least modulus among the roots of $z^4 + 2 z^3 -1$ (which equals approximately $1.3953...$).
    
    To conclude the proof, observe that the number of lollipops between two rightmost paths amortises to a constant.
    Each time we change the path meaningfully (i.e., the image under $\varphi$ changes) by $k$ letters, we need to introduce $2$ changes by $k - 1$ letters before another change by $k$ letters.
    This is a geometric pattern
    (akin to incrementing a binary counter),
    so the total number of lollipops amortises to the number of visited rightmost paths times a constant. \qedhere
    
    \end{proof}
    
    \section*{Concluding remarks}
    
    There are alternative, and arguably simpler, ways to prove that Thomason's algorithm takes exponential time on $G_n$, however we believe that our proof provides valuable insight into the algorithm's behaviour.
    In particular, it might be helpful in development of new, faster algorithms that find a second Hamiltonian cycle in a given cubic graph.
    
    It would be interesting to find a family of cubic graphs with even faster growth of the number of steps taken by Thomason's algorithm.
    
    It is also worth noticing that Eppstein's algorithm \cite{eppstein} can, in the particular case of \(G_n\) and Krawczyk's graphs, solve the problem of finding \emph{all} Hamiltonian cycles efficiently despite being exponential in the worst case. This follows form the fact that both families are extremely constrained -- branching a bounded number of times already forces a single cycle, which Eppstein's deduction rules can later find in linear time.
    
    \renewcommand{\abstractname}{Acknowledgements}
    \begin{abstract}
        Words can hardly express our gratitude to our friends at Beit for their support, criticism, and insightful ideas.
    \end{abstract}

\bibliographystyle{alpha}
\addcontentsline{toc}{section}{References}
\bibliography{references}

\end{document}